\newtheorem{theorem}{Theorem}
\newtheorem{lemma}[theorem]{Lemma}
\theoremstyle{definition}
\newtheorem{observation}[theorem]{Observation}
\newtheorem{corollary}[theorem]{Corollary}
\theoremstyle{remark}
\newtheorem{remark}[theorem]{Remark}
\numberwithin{equation}{section}
\newcommand{\bibtitle}[1]{\textit{#1}}
\begin{document}

\title{ Complete minors of self-complementary graphs}

\date{\today}
\author{Andrei Pavelescu}
\address{University of South Alabama, Mobile, AL 36688}
\email{andreipavelescu@southalabama.edu}

\author{Elena Pavelescu}
\address{University of South Alabama, Mobile, AL 36688}
\email{elenapavelescu@southalabama.edu}

\maketitle

\begin{abstract}
We show that any self-complementary graph  with $n$ vertices contains a $K_{\lfloor \frac{n+1}{2}\rfloor}$ minor.
We derive topological properties of self-complementary graphs.
\end{abstract}
\vspace{0.1in}

\section{Introduction}

There are interesting connections between the topological properties of a graph and those of its complement. 
For example, in \cite{BHK} it is shown that the complement of a planar graph with nine vertices is non planar.
In particular, any self-complementary graph on nine vertices is non planar.
A self-complementary graph is a graph which is isomorphic to its complement. 
Results in \cite{KLV} and \cite{HLS} on the $\mu-$invariant introduced by C. de Verdi\`{e}re \cite{dV} imply that  the complement of a planar graph on ten vertices is intrinsically linked. 
This means that every embedding of such complement in $\mathbb{R}^3$ contains two disjoint linked cycles.

These topological properties of graphs are connected to the existence of complete minors. 
For a graph $G$, \textit{a minor of G} is any graph that can be obtained from $G$ by a sequence of edge deletions and contractions.
An edge contraction means identifying its endpoints and deleting any loops and double edges thus created.
Complete  minors of graphs have been studied extensively.
The order of the largest complete minor of a graph $G$ is called the Hadwiger number of $G$, $h(G)$.
The Hadwiger conjecture, one of the famous problems in graph theory, states that for any graph $G$, $\chi(G)\le h(G)$, where $\chi(G)$ is the chromatic number of $G$.
In this paper, we search for complete minors of self-complementary graphs and we prove the  following
 
 \begin{theorem}
 \label{sc} Any self-complementary graph $G$ on $n \ge 1$ vertices contains a $K_{\lfloor \frac{n+1}{2}\rfloor}$ minor.
 \end{theorem}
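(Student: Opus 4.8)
The plan is to exhibit an explicit $K_t$-model with $t=\lfloor\frac{n+1}{2}\rfloor$ whose branch sets are all single vertices or single edges, using the fact that $G$ being self-complementary provides an isomorphism $\sigma\colon G\to\overline G$, equivalently a permutation $\sigma$ of $V(G)$ with $uv\in E(G)\iff\sigma(u)\sigma(v)\notin E(G)$ for all distinct $u,v$ (an \emph{antimorphism}).

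First I would analyze the cycle structure of $\sigma$. Two distinct fixed points $u,v$ of $\sigma$ would give $uv\in E(G)\iff uv\notin E(G)$, so $\sigma$ has at most one fixed point. If $C=(v_0\,v_1\cdots v_{\ell-1})$ is a nontrivial cycle of $\sigma$, with $\sigma(v_i)=v_{i+1}$ (indices mod $\ell$), then applying $\sigma$ flips the truth value of ``$v_i\sim v_{i+1}$'', so traversing the cycle flips it $\ell$ times and must return it to itself; hence every nontrivial cycle of $\sigma$ has even length. (The stronger fact $4\mid\ell$ holds but is not needed.) Thus $V(G)$ splits into $\sigma$-orbits of even size, plus possibly one fixed point $x_0$, which is present precisely when $n$ is odd.

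Next, within each orbit $C=(v_0\cdots v_{\ell-1})$ the alternation above shows that either all of $v_0v_1,v_2v_3,\dots,v_{\ell-2}v_{\ell-1}$ are edges of $G$ or all of $v_1v_2,v_3v_4,\dots,v_{\ell-1}v_0$ are; either way $C$ carries a perfect matching $M_C$ of $\ell/2$ pairs, each of the form $\{v_j,v_{j+1}\}$, which I will call \emph{consecutive} pairs. The branch sets of the model are declared to be all edges of all the $M_C$, together with $\{x_0\}$ when $n$ is odd. Each is connected, and there are $\sum_C\ell_C/2$ of them (plus one when $n$ is odd), i.e. exactly $\lfloor n/2\rfloor$ when $n$ is even and $\frac{n+1}{2}$ when $n$ is odd, which is $\lfloor\frac{n+1}{2}\rfloor$ in both cases.

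The heart of the argument is verifying that any two branch sets are adjacent in $G$, and here the one useful observation is that $\sigma$ maps a consecutive pair $\{v_j,v_{j+1}\}$ to the consecutive pair $\{v_{j+1},v_{j+2}\}$ and fixes $\{x_0\}$. Given two (disjoint) matching edges $\{a,a'\}$ and $\{b,b'\}$, labelled so that $\sigma(a)=a'$ and $\sigma(b)=b'$, the antimorphism property gives $a\sim b\iff\sigma(a)\not\sim\sigma(b)\iff a'\not\sim b'$, so exactly one of $ab,a'b'$ is an edge of $G$ and the two branch sets are joined; likewise $x_0\sim v_j\iff\sigma(x_0)\not\sim\sigma(v_j)\iff x_0\not\sim v_{j+1}$, so $\{x_0\}$ meets every matching edge, and two singletons never arise. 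Hence the branch sets form a $K_{\lfloor(n+1)/2\rfloor}$-model, which proves the theorem; the case $n=1$ is the trivial $K_1$. I expect the only delicate points to be the parity bookkeeping for orbit sizes and the branch-set count — the adjacency check is essentially automatic once one sees that consecutive pairs are permuted among themselves by $\sigma$.
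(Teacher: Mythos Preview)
Your proof is correct and rests on the same core idea as the paper's: within each $\sigma$-cycle pair $v$ with $\sigma(v)$ (choosing the direction so that $v\sigma(v)\in E(G)$), contract these edges, and use the antimorphism relation to see that for any two such pairs $\{a,\sigma(a)\}$, $\{b,\sigma(b)\}$ exactly one of $ab$, $\sigma(a)\sigma(b)$ is an edge; the fixed point, if present, is handled by the same observation. The paper packages this more elaborately: it first splits $V(G)$ into the high-degree half $N$ and the low-degree half $H$, records that the contracted edges lie in the bipartite piece $L$ between them, and then builds up by induction on the number of $\rho$-cycles (single cycle, two cycles, general), treating the fixed-point case $n=4k+1$ separately at the end. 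Your presentation bypasses the $N/H$ decomposition and the induction entirely, handling all cycles and the fixed point uniformly in one pass. The paper's extra structure makes explicit that the branch-set edges form a perfect matching in $L$, which is pleasant but not logically required; your argument shows that the antimorphism property alone suffices, with no appeal to degrees, and in particular does not need the full strength of Sachs's theorem that $4\mid\ell$.
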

 
 We give examples proving that this bound is the best possible.
 These results add to the existing body of work on self-complementary graphs started with seminal papers by H. Sachs \cite{Sa} and G. Ringel \cite{R}. 
 We note that the ${\lfloor \frac{n+1}{2}\rfloor}$ lower bound was independently found in \cite{RS}.
The last section presents connections between Hadwiger numbers and some topological properties of self-complementary graphs: outerplanarity, planarity, intrinsic linkness, intrinsic knottedness.
  


\section{Notation and Background on Self-Complementary Graphs}

In this article, all graphs are non-oriented, without loops  and without multiple edges. 
For a graph $G$ with $n$ vertices, $V(G)=\{v_1, v_2, \ldots , v_n\}$ denotes the set of vertices of $G$, and $E(G)=\{v_iv_j | \ v_i  \text{ is connected to } v_j\}$ denotes the set of edges  of $G$.
The complete graph with $n$ vertices is denoted by $K_n$.
For $G$ a graph with $n$ vertices,  $cG$ denotes the \textit{complement of $G$} in $K_n$. 
The graph $cG$ has the same set of vertices as $G$ and $E(cG) = \{ v_iv_j | v_iv_j\notin E(G)\}$.
A graph $G$ is called \textit{self--complementary} if there exists a graph isomorphism $\rho:G\rightarrow cG$.
The isomorphism $\rho$ can be viewed as a permutation on the set of vertices of $G$.
For a subset of vertices $\{v_{i_1}, v_{i_2}, \ldots , v_{i_k}\} $, we let  $\big< v_{i_1}, v_{i_2}, \ldots , v_{i_k}\big>_G$ denote the subgraph of $G$ induced by this set.
For two graphs $G_1$ and $G_2$, $G_1\ast G_2$ represents the graph defined by $V(G_1\ast G_2)=V(G_1)\sqcup V(G_2)$ and $E(G_1\ast G_2)=E(G_1)\sqcup E(G_2)\sqcup E$, with $E=\{ab| a\in V(G_1), \, b\in V(G_2)\}$. For example, $K_6=K_4*K_2$.\\

The complete graph on $n$ vertices has $n(n-1)/2$ edges.
Since a self-complementary graph with $n$ vertices has $n(n-1)/4$ edges, 
such a graph has either $n=4k$ or $n=4k+1$ vertices.
In \cite{Sa2}, Sachs describes the cycle structure of $\rho$ for a general self-complementary graph.

\begin{theorem}[H. Sachs]
\label{sachs} Let $G$ be a self-complementary graph on $n$ vertices and $\rho : G \rightarrow cG$ be a fixed isomorphism. Then 
\begin{itemize}
\item[(a)] if $n=4k$, then $\rho$ has no fixed points and all the cycles of $\rho$ have lengths divisible by 4;
\item[(b)] if $n=4k+1$, then $\rho$ has exactly one fixed point and all the nontrivial cycles of $\rho$ have lengths divisible by 4.
\end{itemize}
\end{theorem}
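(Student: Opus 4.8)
The plan is to analyze the action of the vertex permutation $\rho$ on the set of unordered pairs of distinct vertices of $G$. The one essential observation is that, since $\rho\colon G\to cG$ is a graph isomorphism, for \emph{every} pair of distinct vertices $u,v$ we have $uv\in E(G)$ if and only if $\rho(u)\rho(v)\in E(cG)$, i.e.\ if and only if $\rho(u)\rho(v)\notin E(G)$. Thus each application of $\rho$ reverses the ``edge/non-edge'' status of a pair. It follows that if a pair $\{u,v\}$ has period $p$ under $\rho$ (the least $p\ge 1$ with $\rho^{p}\{u,v\}=\{u,v\}$), then its status has been flipped $p$ times and must coincide with the original, so $p$ is even; equivalently, every $\langle\rho\rangle$-orbit of a pair of distinct vertices has even cardinality.

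From this I would first deduce that $\rho$ has at most one fixed point: if $u\ne v$ were both fixed, the pair $\{u,v\}$ would have period $1$, which is odd. Next I would bound the lengths of the nontrivial cycles of $\rho$. Fix a cycle $C=(v_{0}\,v_{1}\,\cdots\,v_{\ell-1})$ of length $\ell\ge 2$ with $\rho(v_{i})=v_{i+1}$ (indices read mod $\ell$). For $0<j<\ell$ one computes that the orbit of $\{v_{0},v_{j}\}$ equals $\{\{v_{m},v_{m+j}\}: m\in\Z\}$ and has cardinality $\ell$, \emph{unless} $\ell$ is even and $j=\ell/2$, in which case it has cardinality $\ell/2$. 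Feeding these two cases into the even-orbit fact kills the unwanted residues: taking $j=1$ shows $\ell$ cannot be odd and $\ge 3$; and taking $j=\ell/2$ when $\ell$ is even forces $\ell/2$ to be even, ruling out $\ell=2$ and more generally $\ell\equiv 2\pmod 4$. Hence every cycle of length $\ge 2$ has length divisible by $4$.

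Finally, since every non-fixed vertex lies on a cycle whose length is a multiple of $4$, the number $f$ of fixed points of $\rho$ satisfies $f\equiv n\pmod 4$; combined with $f\le 1$ this forces $f=0$ when $n=4k$ and $f=1$ when $n=4k+1$, which together with the previous paragraph yields (a) and (b). I expect the only real work to be the orbit-length computation inside a single cycle — in particular, isolating the ``antipodal'' pair $\{v_{0},v_{\ell/2}\}$, whose orbit is exactly half the cycle, since that is precisely what upgrades the conclusion from ``even length'' to ``length divisible by $4$''; the adjacency-reversal property and the fixed-point count are then immediate.
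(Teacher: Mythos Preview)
Your argument is correct and is essentially the classical proof of Sachs' theorem: the adjacency-reversal property forces every $\langle\rho\rangle$-orbit on unordered pairs to have even length, and computing orbit sizes for the pairs $\{v_0,v_1\}$ and $\{v_0,v_{\ell/2}\}$ inside a single cycle of length $\ell$ pins down $4\mid\ell$; the fixed-point count then follows from $f\equiv n\pmod 4$ together with $f\le 1$. One small cosmetic point: your phrase ``taking $j=1$ shows $\ell$ cannot be odd and $\ge 3$'' is fine, but note that for $\ell=2$ the case $j=1$ \emph{is} the antipodal case $j=\ell/2$, so it is already covered by your second clause---you might say this explicitly to avoid any appearance of a gap.

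As for comparison with the paper: there is nothing to compare. The paper does not prove this theorem; it is quoted as background and attributed to Sachs \cite{Sa2}. Your write-up is a self-contained and standard proof of the cited result.
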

Vertices $a,b \in V(G)$ are said to be \textit{similar} if $b=\rho^{2i}(a)$ for some $i$. 
Since the restriction of a graph isomorphism is an isomorphism onto its image, for $S \subset V(G)$ such that $\rho(S)=S$, $\big< S\big>_G $ is itself self-complementary.

\section{Construction of Complete Minors}

In this section, we prove the existence of complete minors of self-complementary graphs. We begin with self-complementary  graphs on $4n$ vertices and extend the result to self-complementary  graphs on $4n+1$ vertices.

Let $G$ be a self-complementary  graph with $4n$ vertices with a fixed isomorphism $\rho : G \rightarrow cG$. 
In particular, via a labelling of the vertices, $\rho$ is an element of $S_{4n}$, the group of permutations on $4n$ characters. 
Since, $deg_G(\rho(v))=4n-1-deg_{cG}(\rho(v))=4n-1-deg_G(v)$, the permutation $\rho$ takes vertices of $G$ of degree larger than the average $2n-\frac{1}{2}$  to vertices of $G$ of degree smaller than the average, and vice versa. 
This means that exactly half of the vertices of $G$ have degree at least $2n$. 

Let $N=\big<v_1,v_2,\ldots, v_{2n}\big>_G$ denote the subgraph of $G$ generated by vertices with degree at least $2n$ and let $H=\big<v_{2n+1}, v_{2n+2},\ldots,v_{4n}\big>_G$ denote the subgraph of $G$ generated by vertices with degree at most $2n-1$.
The isomorphism $\rho$ takes the vertices of $N$ to the vertices of $H$ and the vertices of $H$ to the vertices of  $N$.
Moreover, if $v_iv_j$ is an edge of $N$, then $\rho(v_i)\rho(v_j)$ is an edge of $cG$ and thus is not an edge of $H$.
It follows that $N$ and $H$ are isomorphic to graphs which are complements of each other in $K_{2n}$.

Let $L$ denote the subgraph of $G$ whose edge set is the union of all edges with one endpoint in $N$ and the other in $H$.
Since $\rho$ interchanges vertices of $N$ and $H$, for an edge $e$ in $L$,  $\rho(e)$ is an edge in $cG$ which has one endpoint in $H$ and the other in $N$.
This means $L$ is isomorphic to $\rho(L)$ and their union is $K_{2n,2n}$. 
We say $L$ is \textit{bipartite self-complementary in $K_{2n,2n}$ (BSC)}.
It follows that  $|E(L)|=2n^2$. 

\begin{observation}
\label{cyclehalf}
Since any cycle of $\rho$ is set-invariant,  the graph induced by the vertices of this cycle is self--complementary. 
If $\rho$ is a cycle of length $4k$, then $2k$ of its vertices belong to $N$ and the other $2k$ belong to $H$.
Thus the automorphism $\rho$ induces a bipartite self-complementary graph in $K_{2k,2k}$,
 which has $2k^2$ edges. By similarity of vertices within $N$ and $H$ respectively, each vertex connects to $k$ neighbors. 
 \end{observation}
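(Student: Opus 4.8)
The first sentence I would simply attribute to the remark at the end of Section~2: a cycle of $\rho$ is an orbit of $\rho$, hence a set $S$ with $\rho(S)=S$, so $\langle S\rangle_G$ is self-complementary. For everything else the key tool I would isolate first is that $\rho^2$ is an \emph{automorphism} of $G$: since $\rho$ sends edges of $G$ to non-edges and non-edges to edges, its square preserves adjacency. Because $\rho$ interchanges $N$ and $H$ (established above), I note that $\rho^2$ fixes each of $N$ and $H$ setwise.

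Next I would establish the $2k$/$2k$ split. Writing a length-$4k$ cycle as $C=\{a,\rho(a),\dots,\rho^{4k-1}(a)\}$ with $a\in N$, the fact that $\rho$ swaps $N$ and $H$ forces the vertices of $C$ to alternate between the two sets: the even powers lie in $N$, the odd powers in $H$, and since $4k$ is even the alternation closes up consistently around the cycle, giving exactly $2k$ vertices in $C\cap N$ and $2k$ in $C\cap H$. I would then define $L_C$ as the set of edges of $\langle C\rangle_G$ between these two parts and reuse verbatim the argument given for $L$ above: each $e\in L_C$ has $\rho(e)$ a non-edge joining $C\cap N$ to $C\cap H$, so $L_C$ and $\rho(L_C)$ partition the edge set of the complete bipartite graph on $(C\cap N,\, C\cap H)\cong K_{2k,2k}$. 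Hence $L_C$ is BSC in $K_{2k,2k}$ with $|E(L_C)|=\tfrac12(2k)(2k)=2k^2$.

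Finally, for the regularity I would bring in similarity. The even powers $a,\rho^2(a),\dots$ are precisely the vertices similar to $a$, and on the invariant set $C$ the automorphism $\rho^2$ acts as a single $2k$-cycle on $C\cap N$ and a single $2k$-cycle on $C\cap H$; in particular it is transitive on each colour class. Since $\rho^2$ is a graph automorphism it preserves $L_C$, so all vertices of $C\cap N$ must share one common $L_C$-degree $d_N$ and all vertices of $C\cap H$ a common degree $d_H$. Double counting the $2k^2$ edges of $L_C$ from each side gives $2k\,d_N=2k^2=2k\,d_H$, hence $d_N=d_H=k$, which is the stated claim that each vertex connects to $k$ neighbors. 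I expect this last step to be the only real obstacle: regularity does not follow from self-complementarity of $\langle C\rangle_G$ alone, and it has to be extracted from the stronger fact that $\rho^2$ cycles transitively through each part—this is exactly what ``similarity of vertices within $N$ and $H$'' is supplying in the statement.
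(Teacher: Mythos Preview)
Your proposal is correct and follows essentially the same reasoning the paper embeds in the observation itself: the invariance of the cycle gives self-complementarity via the remark in Section~2, the alternation of $N$ and $H$ under $\rho$ gives the $2k/2k$ split, the general $L$ argument gives the BSC structure and edge count, and the transitivity of $\rho^2$ on each part (``similarity'') forces the common degree $k$ by double counting. You have simply made explicit the steps the paper leaves implicit.
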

 
 With the notation of this section, we have:

\begin{lemma}
\label{4ncycle} Let $G$ be a self-complementary graph on $4n$ vertices. 
Assume $\rho : G \rightarrow cG$ is a $4n-$cycle. 
Then $G$ contains a $K_{2n}$ minor obtained by contracting $2n$ pairwise nonadjacent edges of $L$.
\end{lemma}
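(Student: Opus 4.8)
The plan is to exploit the extra rigidity coming from the fact that $\rho\colon G\to cG$ being an isomorphism forces $\rho^2$ to be an automorphism of $G$: this makes $N$ and $H$ into circulant graphs that are complementary to one another, and then a single ``diagonal'' matching of $L$ collapses $N$ and $H$ onto a common $K_{2n}$.

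First I would fix a labelling $v_0,v_1,\dots,v_{4n-1}$ with $\rho(v_i)=v_{i+1}$ (indices mod $4n$), chosen so that $v_0\in N$; this is possible since $\rho$ is a single $4n$-cycle. Because $\deg_G(\rho(v))=4n-1-\deg_G(v)$, the map $\rho$ interchanges $N$ and $H$, so the cycle alternates between the two sets. Setting $a_i:=v_{2i}$ and $b_i:=v_{2i+1}$ for $i\in\mathbb{Z}_{2n}$ then gives $N=\langle a_0,\dots,a_{2n-1}\rangle_G$, $H=\langle b_0,\dots,b_{2n-1}\rangle_G$, with $\rho(a_i)=b_i$ and $\rho(b_i)=a_{i+1}$. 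Since $\rho\colon G\to cG$ and (hence) $\rho\colon cG\to G$ are both isomorphisms, $\rho^2\colon G\to G$ is an automorphism; it fixes $N$ and $H$ setwise and acts on each as the cyclic shift $a_i\mapsto a_{i+1}$, $b_i\mapsto b_{i+1}$.

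Next I would record what this buys. (i) $N$ is circulant: whether $a_ia_j\in E(N)$ depends only on $j-i\bmod 2n$, say $a_ia_j\in E(N)\iff j-i\in C$ for a symmetric $C\subseteq\mathbb{Z}_{2n}\setminus\{0\}$. (ii) $L$ is circulant-bipartite: $a_ib_j\in E(L)\iff j-i\in D$ for some $D\subseteq\mathbb{Z}_{2n}$, and $|D|=n\ge 1$ by Observation~\ref{cyclehalf}. (iii) From $\rho(a_i)=b_i$ and the definition of $\rho$, $a_ia_j\in E(G)\iff b_ib_j\notin E(G)$, so $H$ is the complementary circulant: for $i\ne j$, $b_ib_j\in E(H)\iff j-i\notin C$. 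Then I would pick any $d_0\in D$ and set $M=\{\,a_ib_{i+d_0}:i\in\mathbb{Z}_{2n}\,\}$: each of these $2n$ edges lies in $L$ (as $d_0\in D$), and they are pairwise vertex-disjoint while covering all of $V(N)\cup V(H)$, so $M$ is a perfect matching of $L$, i.e.\ $2n$ pairwise nonadjacent edges of $L$. Contracting $M$ yields $2n$ vertices $w_i=\{a_i,b_{i+d_0}\}$, and for $i\ne k$ either $k-i\in C$, in which case $a_ia_k\in E(N)\subseteq E(G)$ gives an edge between $w_i$ and $w_k$, or $k-i\notin C$, in which case $(k+d_0)-(i+d_0)=k-i\notin C$ and so $b_{i+d_0}b_{k+d_0}\in E(H)\subseteq E(G)$ gives an edge between $w_i$ and $w_k$. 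Hence every pair of contracted vertices is adjacent and the minor is $K_{2n}$.

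I do not anticipate a genuine obstacle. The conceptual core is point (iii): once one sees that $N$ and $H$ sit as complementary circulants on the same cyclic index set, the diagonal matching works regardless of which $d_0\in D$ is chosen, so the only role $L$ plays is to be nonempty enough to contain such a matching ($D\ne\emptyset$). The one place demanding a little care is the bookkeeping in the first step: verifying that $\rho$ genuinely alternates $N$ and $H$ along the $4n$-cycle (so that the $a_i$ and $b_i$ are indexed compatibly with $\rho(a_i)=b_i$) and that $\rho^2$ restricts to the claimed cyclic shifts on $N$ and on $H$.
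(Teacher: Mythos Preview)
Your proof is correct and follows essentially the same approach as the paper's: both arguments label the cycle so that even powers of $\rho$ form $N$ and odd powers form $H$, use that $\rho^2$ is a $G$-automorphism to obtain the circulant structure, select a ``diagonal'' matching $\{\rho^{2i}(a),\rho^{2i+t}(a)\}$ in $L$ (your $d_0$ is the paper's $(t-1)/2$), and then observe that a non-edge in $N$ is sent by an odd power of $\rho$ to an edge in $H$ between the matched partners. The only difference is expository: you make the circulant structure and connection sets $C,D$ explicit, whereas the paper carries out the same reasoning by applying $\rho^t$ directly.
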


\begin{proof}
Let $\rho=(a, \rho(a)...\rho^{4n-1}(a))$,  $V(N)=\{a, \rho^2(a),...,\rho^{4n-2}(a)\}$ and $V(H)=\{\rho(a), \rho^3(a),...,\rho^{4n-1}(a)\}$. 
Let $1 \le t \le 4n-1$ odd such that $a\rho^t(a) \in E(L)$. 
By Observation \ref{cyclehalf}, we have $n$ such choices for $t$.
For $0\le i \le 2n-1$, since $\rho^{2i}$ is a graph isomorphism, $\rho^{2i}(a)\rho^{2i+t}(a) \in E(L)$.


\begin{figure}[htpb!]
\begin{center}
\begin{picture}(170, 85)
\put(0,0){\includegraphics[width=2in]{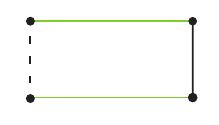}}
\put(0,0){$\rho^{2l}(a)$}
\put(120,0){$\rho^{2l+t}(a)$}
\put(0,80){$\rho^{2k}(a)$}
\put(120,80){$\rho^{2k+t}(a)$}
\end{picture}
\caption{Green edges are edges of $L$ which are contracted to obtain the $K_{2n}$ minor.}
\label{fig-diagram4n}
\end{center}
\end{figure}

For $0 \le k < l \le 2n-1$ such that $\rho^{2k}(a)\rho^{2l}(a) \notin E(N)$, by applying $\rho^t$, it follows that $\rho^{2k+t}(a)\rho^{2l+t}(a) \in E(H)$. See Figure \ref{fig-diagram4n}.
This shows that the contraction of the $2n$ edges $\rho^{2i}(a)\rho^{2i+t}(a),\, 0\le i \le 2n-1$ produces a $K_{2n}$ minor of $G$. \end{proof}

\begin{observation}
\label{consecutive} If $a$ is a vertex of a self-complementary graph $G$ with $4n$ vertices, since $\rho$ reverses incidence relations, $a$ neighbors exactly one of $\{\rho(a), \rho^{-1}(a)\}$. In particular, in any cycle of $\rho$ one may choose a generator $a$ such that $a\rho(a)\in E(G)$.
\end{observation}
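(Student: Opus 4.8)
The plan is to prove the first assertion by a one-line application of the defining property of $\rho$ to a single well-chosen pair of vertices, and then to deduce the ``in particular'' clause by reading off one edge along a cycle.

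First I would record the setup needed for the statement to even make sense. Since $G$ has $4n$ vertices, Theorem~\ref{sachs}(a) tells us $\rho$ has no fixed points and every cycle of $\rho$ has length divisible by $4$; in particular $\rho$ has no $2$-cycles, so $\rho^{2}(a)\neq a$, and hence $\rho(a)$, $a$, $\rho^{-1}(a)$ are three distinct vertices and $\{\rho(a),\rho^{-1}(a)\}$ is a genuine two-element set. Now I use that $\rho\colon G\to cG$ is a graph isomorphism: for any vertices $x,y$ we have $xy\in E(G)$ if and only if $\rho(x)\rho(y)\in E(cG)$, i.e.\ if and only if $\rho(x)\rho(y)\notin E(G)$. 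Applying this to the pair $x=\rho^{-1}(a)$, $y=a$ gives $\rho^{-1}(a)\,a\in E(G)\iff a\,\rho(a)\notin E(G)$. That is precisely the assertion that $a$ is adjacent to exactly one of $\rho(a)$ and $\rho^{-1}(a)$.

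For the ``in particular'' clause, take any vertex $a$ lying on a given cycle of $\rho$. By the first part, at least one of $a\rho(a)$ and $a\rho^{-1}(a)$ is an edge of $G$. If $a\rho(a)\in E(G)$, then $a$ is already the desired generator. If instead $a\rho^{-1}(a)\in E(G)$, set $b=\rho^{-1}(a)$; then $b$ generates the same cycle and $b\,\rho(b)=\rho^{-1}(a)\,a\in E(G)$, as required. (In fact the same reasoning shows the indicator of $a_{i}a_{i+1}$ being an edge alternates as one goes around the cycle, consistently since the cycle length is even, but only the existence of one such edge is needed.)

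I do not expect a real obstacle: the entire content is the incidence-reversing property of $\rho$ together with the absence of short cycles from Theorem~\ref{sachs}. The only point deserving a moment's care is verifying $\rho(a)\neq\rho^{-1}(a)$, so that ``exactly one'' is meaningful and neither putative edge is a loop; this is exactly where the hypothesis of $4n$ (rather than $4n+1$) vertices enters, since on $4n+1$ vertices the unique fixed point of $\rho$ would be an exception to the statement.
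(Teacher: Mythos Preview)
Your proof is correct and matches the paper's intent: the observation is stated without a separate proof, with the justification ``since $\rho$ reverses incidence relations'' folded into the statement itself, which is exactly the mechanism you spell out. Your added care in checking $\rho(a)\neq\rho^{-1}(a)$ via Theorem~\ref{sachs}(a) is a detail the paper leaves implicit.
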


\begin{lemma}
\label{2cycles} 
Let $G$ be a self-complementary graph on $4n$ vertices. 
Assume $\rho : G \rightarrow cG$ factors as the product of two cycles of lengths $4k$ and $4l=4(n-k)$, respectively. 
Then $G$ contains a $K_{2n}$ minor obtained by contracting $2n$ pairwise nonadjacent edges of $L$.
\end{lemma}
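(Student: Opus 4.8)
The plan is to treat the two cycles of $\rho$ separately and to observe that the edges joining super-vertices coming from different cycles are there automatically. Write $\rho=\rho_1\rho_2$, where $\rho_1$ is a $4k$-cycle on a vertex set $A$ and $\rho_2$ is a $4l$-cycle on a vertex set $B$, so that $V(G)=A\sqcup B$. By Observation \ref{consecutive} applied to each cycle I may choose generators $a$ of $\rho_1$ and $b$ of $\rho_2$ with $a\rho(a)\in E(G)$ and $b\rho(b)\in E(G)$. The one fact I will use repeatedly is that, since $\rho\colon G\to cG$ is an isomorphism, for any two distinct vertices $u,v$ one has $uv\in E(G)$ if and only if $\rho(u)\rho(v)\notin E(G)$; in particular $\rho^2$ is an automorphism of $G$.

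First I build the matching. Applying $\rho^2$ repeatedly to $a\rho(a)\in E(G)$ shows $e_i:=\rho^{2i}(a)\rho^{2i+1}(a)\in E(G)$ for $0\le i\le 2k-1$, and likewise $f_j:=\rho^{2j}(b)\rho^{2j+1}(b)\in E(G)$ for $0\le j\le 2l-1$. The $2k$ edges $e_i$ exhaust the vertices of $A$ and the $2l$ edges $f_j$ exhaust the vertices of $B$, so together they form $2k+2l=2n$ pairwise nonadjacent edges of $G$. Each $e_i$ joins two consecutive vertices of a cycle of $\rho$, and since $\rho$ interchanges $N$ and $H$, one endpoint of $e_i$ lies in $N$ and the other in $H$; hence $e_i\in E(L)$, and similarly $f_j\in E(L)$. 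This gives the required $2n$ pairwise nonadjacent edges of $L$.

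It remains to show that contracting all of these produces $K_{2n}$. Write $\bar e_i,\bar f_j$ for the resulting super-vertices; it suffices to check that each pair of them is adjacent. For $\bar e_i$ and $\bar e_j$ with $i\ne j$: either $\rho^{2i}(a)\rho^{2j}(a)\in E(G)$, which after contraction is an edge $\bar e_i\bar e_j$, or $\rho^{2i}(a)\rho^{2j}(a)\notin E(G)$, in which case the isomorphism property forces $\rho^{2i+1}(a)\rho^{2j+1}(a)\in E(G)$, again an edge between $e_i$ and $e_j$; this is the argument of Lemma \ref{4ncycle} applied to the self-complementary graph $\langle A\rangle_G$. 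The identical reasoning on $B$ handles $\bar f_i,\bar f_j$. Finally, for $\bar e_i$ coming from $A$ and $\bar f_j$ coming from $B$, the same dichotomy applied to the distinct vertices $\rho^{2i}(a)$ and $\rho^{2j}(b)$ shows that exactly one of $\rho^{2i}(a)\rho^{2j}(b)$ and $\rho^{2i+1}(a)\rho^{2j+1}(b)$ is an edge of $G$; either of these joins $e_i$ to $f_j$, so $\bar e_i\sim\bar f_j$. Hence the contraction is complete on $2n$ vertices, i.e.\ $K_{2n}$.

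I do not anticipate a real obstacle: the statement is essentially Lemma \ref{4ncycle} carried out on each of the two cycles simultaneously, and the only new point — adjacency of super-vertices coming from different cycles — is the very same one-line dichotomy. The two places that need a little care are choosing the generators $a,b$ via Observation \ref{consecutive} so that the contracted matchings are exactly $\{\rho^{2i}(a)\rho^{2i+1}(a)\}$ and $\{\rho^{2j}(b)\rho^{2j+1}(b)\}$ (this is what makes the cross-adjacency indices line up cleanly, rather than getting matchings with some less convenient shift), and confirming that these matching edges lie in $L$, which is immediate from the $N/H$-alternation of $\rho$ along its cycles.
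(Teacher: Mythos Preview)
Your proof is correct and follows essentially the same approach as the paper: choose generators $a,b$ via Observation \ref{consecutive}, contract the matching $\{\rho^{2i}(a)\rho^{2i+1}(a)\}\cup\{\rho^{2j}(b)\rho^{2j+1}(b)\}$, invoke Lemma \ref{4ncycle} on each cycle for the within-cycle adjacencies, and use the one-line dichotomy (apply $\rho$ to a non-edge) for the cross-cycle adjacencies. You spell out a few details the paper leaves implicit (why the matching edges lie in $L$, and that $\rho^2$ propagates $a\rho(a)$ to all $e_i$), but the structure is identical.
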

\begin{proof}

Let $\rho=\tau\sigma=(a\rho(a)...\rho^{4k-1}(a))(b\rho(b)...\rho^{4l-1}(b))$. By Observation \ref{consecutive}, we may assume $a\rho(a), b\rho(b) \in E(L)$.

 
\begin{figure}[htpb!]
\begin{center}
\begin{picture}(170, 85)
\put(0,0){\includegraphics[width=2in]{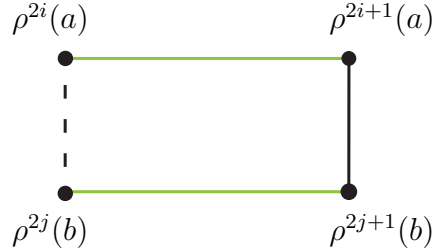}}
\put(0,0){$\rho^{2j}(b)$}
\put(120,0){$\rho^{2j+1}(b)$}
\put(0,80){$\rho^{2i}(a)$}
\put(120,80){$\rho^{2i+1}(a)$}
\end{picture}
\caption{ There is always and edge between the pair of vertices $\{\rho^{2i}(a), \rho^{2i+1}(a)\}$ and the pair of vertices $\{\rho^{2j}(b), \rho^{2j+1}(b)\}$}
\label{fig-diagram-mix}
\end{center}
\end{figure}

Let $0\le i\le 2k-1$ and $0 \le j \le 2l-1$. 
If $\rho^{2i}(a)\rho^{2j}(b) \notin E(G)$, then, by applying $\rho$, we obtain $\rho^{2i+1}(a)\rho^{2j+1}(b)\in E(G)$. 
See Figure \ref{fig-diagram-mix}.
This proves that there is at least one edge between $\{\rho^{2i}(a),\rho^{2i+1}(a)\}$ and $\{\rho^{2j}(b),\rho^{2j+1}(b)\}$, for any $0\le i\le 2k-1$ and $0 \le j \le 2l-1$. 
This and Lemma \ref{4ncycle}  imply that the minor obtained by contracting the edges $\rho^{2i}(a)\rho^{2i+1}(a)$ for $0 \le i \le 2k-1$ and the edges $\rho^{2j}(b)\rho^{2j+1}(b)$ for $0 \le j \le 2l-1$ is isomorphic to $K_{2k}\ast K_{2l}=K_{2n}$.
\end{proof}

\begin{lemma} Let $G$ be a self-complementary graph on $4n$ vertices. Then $G$ contains a $K_{2n}$ minor obtained by contracting $2n$ pairwise nonadjacent edges of $L$.

\label{sc4n}
\end{lemma}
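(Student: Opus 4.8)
The plan is to run the arguments of Lemmas \ref{4ncycle} and \ref{2cycles} simultaneously over the full cycle decomposition of $\rho$. Write $\rho=\sigma_1\sigma_2\cdots\sigma_m$, where by Theorem \ref{sachs}(a) each $\sigma_s$ is a cycle of length $4k_s$ and $k_1+\cdots+k_m=n$. For each $s$ I first want a generator $a_s$ of $\sigma_s$ such that every edge $\rho^{2i}(a_s)\rho^{2i+1}(a_s)$, $0\le i\le 2k_s-1$, lies in $L$. By Observation \ref{consecutive} one may choose $a_s$ with $a_s\rho(a_s)\in E(G)$; since $\rho^2$ is an automorphism of $G$, applying $\rho^{2i}$ gives $\rho^{2i}(a_s)\rho^{2i+1}(a_s)\in E(G)$ for all $i$, and since $\rho$ interchanges $V(N)$ and $V(H)$, the two endpoints of each such edge lie on opposite sides of the partition, so the edge belongs to $L$.

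Next, let $\mathcal{E}=\{\rho^{2i}(a_s)\rho^{2i+1}(a_s)\,:\,1\le s\le m,\ 0\le i\le 2k_s-1\}$, a set of $\sum_s 2k_s=2n$ edges of $L$. Within a fixed cycle the pairs of endpoints are disjoint, and distinct cycles have disjoint vertex sets, so the edges of $\mathcal{E}$ are pairwise nonadjacent. Contracting all of them yields a minor $M$ of $G$ with exactly $2n$ vertices, one per edge of $\mathcal{E}$; it remains to show $M\cong K_{2n}$, i.e.\ that any two of these contracted vertices are joined by an edge of $G$.

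For two edges from the same cycle $\sigma_s$, say $\rho^{2k}(a_s)\rho^{2k+1}(a_s)$ and $\rho^{2l}(a_s)\rho^{2l+1}(a_s)$ with $k<l$, the argument of Lemma \ref{4ncycle} (with $t=1$) applies: if $\rho^{2k}(a_s)\rho^{2l}(a_s)\notin E(G)$ then $\rho^{2k}(a_s)\rho^{2l}(a_s)\in E(cG)$, so applying $\rho$ yields $\rho^{2k+1}(a_s)\rho^{2l+1}(a_s)\in E(G)$; either way the two contracted vertices are adjacent. For two edges from distinct cycles $\sigma_s$ and $\sigma_t$, the argument of Lemma \ref{2cycles} applies verbatim: if $\rho^{2i}(a_s)\rho^{2j}(a_t)\notin E(G)$ then $\rho^{2i+1}(a_s)\rho^{2j+1}(a_t)\in E(G)$. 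Hence every pair of contracted vertices is adjacent, $M\cong K_{2n}$, and $M$ is obtained by contracting $2n$ pairwise nonadjacent edges of $L$, as claimed.

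I do not expect a genuine obstacle: the entire content already resides in Lemmas \ref{4ncycle} and \ref{2cycles}, and this lemma is just the bookkeeping that glues the within-cycle and between-cycle estimates across an arbitrary cycle type. The only point needing care is the first step, checking that the selected matching of consecutive edges in each cycle actually lands in $L$ rather than merely in $G$, which is why I isolate the remark that endpoints at consecutive powers of $\rho$ lie on opposite sides of $V(G)=V(N)\sqcup V(H)$.
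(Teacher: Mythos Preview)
Your proof is correct and follows essentially the same strategy as the paper: choose generators $a_s$ via Observation \ref{consecutive}, contract the consecutive pairs $\{\rho^{2i}(a_s),\rho^{2i+1}(a_s)\}$, and verify adjacency using the arguments of Lemmas \ref{4ncycle} and \ref{2cycles}. The only organizational difference is that the paper packages this as an induction on the number of cycles of $\rho$, peeling off one cycle at a time, whereas you handle all cycles simultaneously; your direct version is marginally cleaner because it makes explicit from the outset exactly which $2n$ edges of $L$ are contracted, a point the paper's inductive phrasing leaves somewhat implicit.
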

 
 \begin{proof}
 Let $\rho : G \rightarrow cG$ be a fixed isomorphism. 
 Let $\rho= \tau_1\tau_2...\tau_s$ be the cycle decomposition, where the length of each $\tau_i$ is a multiple of 4. We shall prove the conclusion by induction on $s$, the number of disjoint cycles of $\rho$.
 The base case is done by Lemma \ref{4ncycle}.
 For $s>1$, by Observation \ref{consecutive}, for each $1 \le i \le s$ there is a vertex $a_i$ such that $\tau_i=(a_i\rho(a_i)...)$ and $a_i\rho(a_i) \in E(L)$. 
For any cycle $\tau_1 \ne \tau_k$ of $\rho$, the arguments of Lemma \ref{2cycles} show that there is at least one edge between  $\{\rho^{2i}(a_1),\rho^{2i+1}(a_1)\}$ and $\{\rho^{2j}(a_k),\rho^{2j+1}(a_k)\}$, for any $0\le i, j$. By the induction hypothesis, we obtain a $K_{2n-ord(\tau_1)/2}$ minor of the subgraph of $G$ induced by the vertices not in $\tau_1$.  Each of the vertices of this minor connects to at least one vertex of each pair $\{\rho^{2i}(a_1),\rho^{2i+1}(a_1)\}$, $0\le i< ord(\tau_1)$. Contracting these $\frac{1}{2}ord(\tau_1)$ edges, we obtain a $K_{2n}=K_{2n-ord(\tau_1)/2}\ast K_{ord(\tau_1)/2}$ minor of $G$.
 \end{proof}


\begin{lemma} If $G$ is a self-complementary graph with $4n+1$ vertices, then $G$ contains a $K_{2n+1}$ minor.
\label{sc4n+1}
\end{lemma}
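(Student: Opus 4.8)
The plan is to reduce the $4n+1$ case to the $4n$ case already handled in Lemma~\ref{sc4n}. By Theorem~\ref{sachs}(b), the isomorphism $\rho$ has exactly one fixed point, say $w$, and $\rho$ restricted to the remaining $4n$ vertices is a permutation all of whose cycles have length divisible by $4$. Let $S = V(G) \setminus \{w\}$; then $\rho(S) = S$, so by the remark at the end of Section~2, $\big< S\big>_G$ is itself a self-complementary graph on $4n$ vertices, with fixed isomorphism $\rho|_S$.

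First I would apply Lemma~\ref{sc4n} to $\big< S\big>_G$ to obtain a $K_{2n}$ minor obtained by contracting $2n$ pairwise nonadjacent edges of the bipartite self-complementary graph $L \subset \big< S\big>_G$ (the edges between the high-degree half $N$ and the low-degree half $H$ of $\big< S\big>_G$). The key point is that this minor uses only $4n$ vertices and leaves $w$ untouched. Now I want to enlarge this $K_{2n}$ minor to a $K_{2n+1}$ minor by using $w$ itself as the branch set for the new vertex. For this it suffices to check that $w$ is adjacent in $G$ to at least one endpoint of each of the $2n$ contracted edges; since each contracted edge has one endpoint in $N$ and one in $H$, it is enough to show that $w$ cannot be nonadjacent to both endpoints of such an edge.

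The main obstacle — and the heart of the argument — is exactly this adjacency claim about $w$. Here is where I would use that $\rho(w) = w$ combined with the edge-reversing property: for any vertex $v \in S$, exactly one of $wv$, $w\rho(v)$ is an edge of $G$ (because $wv \in E(G) \iff \rho(w)\rho(v) = w\rho(v) \notin E(G)$). So $w$ is nonadjacent to $v$ precisely when it is adjacent to $\rho(v)$. I would then revisit the construction in Lemmas~\ref{4ncycle}--\ref{sc4n}: there the contracted edge incident to a cycle-generator $a$ has the form $\rho^{2i}(a)\rho^{2i+1}(a)$ (or $\rho^{2i}(a)\rho^{2i+t}(a)$ with $t$ odd), so its two endpoints are $\rho(\,\cdot\,)$ of each other up to an even power; applying the parity argument for $w$ to this pair shows $w$ is adjacent to at least one of them. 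More precisely, if $w$ were adjacent to neither endpoint of some contracted edge $xy$ with $y = \rho^{\text{odd}}(x)$, then from $wx \notin E(G)$ we get $w\rho(x) \in E(G)$, and iterating along the odd-length jump forces a contradiction with $wy \notin E(G)$. Thus $w$ connects to every branch set of the $K_{2n}$ minor, and adjoining $w$ yields the desired $K_{2n+1}$ minor.

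I would also double-check the small/degenerate cases ($n = 0$, giving the single vertex $K_1$, which trivially contains a $K_1$ minor) so that the statement holds for all self-complementary graphs with $4n+1$ vertices, completing together with Lemma~\ref{sc4n} the proof of Theorem~\ref{sc}. The only real subtlety to get right is the bookkeeping of which endpoint of each contracted edge lies in $N$ versus $H$ and how the parity of the exponent in $\rho^{\bullet}(a)$ interacts with the fixed point $w$; once that is pinned down, the extension step is immediate.
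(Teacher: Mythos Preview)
Your proposal is correct and follows essentially the same route as the paper: remove the unique fixed point $w$, apply Lemma~\ref{sc4n} to the remaining self-complementary graph on $4n$ vertices to get a $K_{2n}$ minor via contraction of $2n$ pairwise nonadjacent edges of $L$, and then argue that $w$ is adjacent to exactly one endpoint of each contracted edge. The paper phrases the last step as ``within each cycle of $\rho$, the neighbors of $w$ are either all in $N$ or all in $H$,'' which is equivalent to your parity/alternation argument that $w\rho^{k}(x)\in E(G)$ iff $k$ is odd once $wx\notin E(G)$; since every contracted edge has the form $\rho^{2i}(a)\rho^{2i+t}(a)$ with $t$ odd, both formulations give the conclusion immediately.
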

\begin{proof}
Let $G$ be a self-complementary graph whose set of vertices is $V(G)=\{v_1,...,v_{4n+1}\}$. 
Let $\rho :G \rightarrow cG$ be a fixed isomorphism. 
By Theorem \ref{sachs}, $\rho$ has exactly one fixed point. Assume $\rho(v_{4n+1})=v_{4n+1}$.
The graph $\overline{G}=\big <v_1, ...,v_{4n} \big >_G$ is a self-complementary graph with $4n$ vertices. 
Let $N=\big <v_1, ...,v_{2n} \big >_{\overline{G}}$, $H=\big <v_{2n+1}, ...,v_{4n} \big >_{\overline{G}}$. We assumed $deg_{\overline{G}}(v_i)\ge 2n$ for $1\le i \le 2n$ and $deg_{\overline{G}}(v_i)\le 2n-1$ for $2n+1 \le i \le 4n$. 

For any $a\in V(\overline{G})$, as $v_{4n+1}$ is a fixed point for $\rho$ and $\rho$ reverses incidence relations, $v_{4n+1}$ connects to exactly one of $a$ or $\rho(a)$. This means that for any nontrivial cycle of $\rho$, $v_{4n+1}$ neighbors exactly half of the vertices of this cycle. Moreover, these neighbors are either all vertices  of $N$, or all vertices of $H$. By Lemma \ref{sc4n}, $\overline{G}$ contains a $K_{2n}$ minor which can be obtained by contracting $2n$ pairwise nonadjacent edges of $L$.
 Since  $v_{4n+1}$ connects to exactly one endpoint of each of these edges, the contraction of the $2n$ edges creates a $ K_{2n+1}=K_{2n}\ast K_1$ minor of $G$. 
 \end{proof}
 We summarize the results in lemmas \ref{sc4n} and \ref{sc4n+1} in the following: 
 
 \begin{theorem}
 \label{sc} Any self-complementary graph $G$ on $n \ge 1$ vertices contains a $K_{\lfloor \frac{n+1}{2}\rfloor}$ minor.\\
 \end{theorem}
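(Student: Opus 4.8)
The plan is to reduce Theorem \ref{sc} to the two lemmas already proved, handling the two residue classes modulo $4$ separately and then dealing with the small cases. First I would recall that, by the edge-count argument in Section 2, a self-complementary graph has either $n = 4k$ or $n = 4k+1$ vertices, so every $n \ge 1$ for which a self-complementary graph exists falls into exactly one of these two families. In the case $n = 4k$, Lemma \ref{sc4n} gives a $K_{2k}$ minor, and since $\lfloor \frac{4k+1}{2} \rfloor = 2k$, this is exactly the claimed bound. In the case $n = 4k+1$, Lemma \ref{sc4n+1} gives a $K_{2k+1}$ minor, and since $\lfloor \frac{4k+2}{2} \rfloor = 2k+1$, again the bound matches. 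So the proof is essentially a case split followed by a floor-function computation.

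The only genuine loose end is the boundary behavior: the statement is asserted for all $n \ge 1$, but Lemmas \ref{sc4n} and \ref{sc4n+1} are phrased in terms of a parameter $n$ with the implicit assumption $n \ge 1$, which covers $4n \ge 4$ and $4n+1 \ge 5$. The remaining cases are $n = 1$ (the single vertex $K_1$, which is trivially self-complementary) and $n = 4$ with $k = 1$, which is covered by Lemma \ref{sc4n} with its parameter equal to $1$; for $n=1$ one notes $\lfloor \frac{2}{2} \rfloor = 1$ and $K_1$ contains a $K_1$ minor. I would also remark that $n = 2, 3$ admit no self-complementary graphs, so the statement is vacuous there and nothing needs to be checked. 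Thus the write-up is just: \emph{If $n = 4k$, apply Lemma \ref{sc4n}; if $n = 4k+1$, apply Lemma \ref{sc4n+1}; check $\lfloor (n+1)/2 \rfloor$ equals $2k$ or $2k+1$ respectively; observe the single-vertex case directly.}

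I do not expect any real obstacle here — the substantive work has all been done in Lemmas \ref{4ncycle}, \ref{2cycles}, \ref{sc4n}, and \ref{sc4n+1}. The one thing to be careful about in the exposition is making sure the indexing convention is consistent: the "$n$" appearing in the lemmas is the quarter (or near-quarter) of the number of vertices, whereas the "$n$" in the theorem statement is the number of vertices itself, so the proof should rename variables to avoid collision, e.g. write $n = 4m$ or $n = 4m+1$ and invoke the lemmas with parameter $m$. With that bookkeeping in place, the proof is a two- or three-sentence deduction, which is presumably why the authors simply wrote "We summarize the results in lemmas \ref{sc4n} and \ref{sc4n+1} in the following."
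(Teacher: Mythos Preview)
Your proposal is correct and matches the paper's approach exactly: the theorem is stated as an immediate summary of Lemmas~\ref{sc4n} and~\ref{sc4n+1}, with no separate argument given. The extra bookkeeping you supply (the floor computation, the vacuous cases $n=2,3$, and the trivial case $n=1$) is sound and simply makes explicit what the paper leaves implicit.
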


The bound $\lfloor \frac{n+1}{2}\rfloor$ is the best that can be guaranteed as the next theorem proves.

\begin{observation}
\label{sharp} 
\begin{enumerate}
\item[(a)]  For $n \ge 1$, there exist self-complementary graphs with $4n$ vertices which do not contain a $K_{2n+1}$ minor.
\item[(b)] For $n \ge 0$, there exist self-complementary graphs with $4n+1$ vertices which do not contain a $K_{2n+2}$ minor.
\end{enumerate}
\end{observation}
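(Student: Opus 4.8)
The plan is to exhibit explicit families of self-complementary graphs achieving the bound, and the natural place to look is among graphs with small Hadwiger number, i.e. graphs that are close to planar or have bounded treewidth. For part (a) with $n=1$ ($4$ vertices) and part (b) with $n=0$ ($1$ vertex) and $n=1$ ($5$ vertices), the only self-complementary graphs are $P_4$ and $C_5$ (and $K_1$), which are planar and visibly contain no $K_3$, resp.\ $K_4$, minor, so the small cases are immediate. For the general construction I would build a self-complementary graph on $4n$ (resp.\ $4n+1$) vertices that is \emph{planar}, or at least has Hadwiger number exactly $2n$ (resp.\ $2n+1$): if $G$ is planar then $h(G)\le 4 < 2n+1$ only for small $n$, so planarity alone is too weak for large $n$ and I instead want graphs whose Hadwiger number grows like $n/2$ but no faster.

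The cleanest route is to take advantage of the $\ast$-decomposition appearing in Lemma \ref{sc4n}. One checks that if $G_1$ is self-complementary on $n_1$ vertices and $G_2$ is self-complementary on $n_2$ vertices, then a suitable ``join-type'' combination built from $G_1$, $G_2$, and the BSC structure produces a self-complementary graph on $n_1+n_2$ vertices whose Hadwiger number is at most $h(G_1)+h(G_2)$ — the point being that a $K_m$ minor in a graph of the form $A \ast B$ with few cross-edges must split its branch sets in a controlled way. Starting from the base pieces $P_4$ (on $4$ vertices, $h=2$) and $C_5$ (on $5$ vertices, $h=3$), and iterating the construction, I would obtain for each $n\ge 1$ a self-complementary graph on $4n$ vertices with Hadwiger number exactly $2n$, hence with no $K_{2n+1}$ minor, proving (a); combining $n-1$ copies of the $P_4$-piece with one $C_5$-piece gives a self-complementary graph on $4n+1$ vertices with Hadwiger number $2n+1$, hence no $K_{2n+2}$ minor, proving (b). Theorem \ref{sc} guarantees these graphs do contain $K_{2n}$ (resp.\ $K_{2n+1}$) minors, so the bound is sharp.

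The main obstacle is making the ``few cross-edges forces split branch sets'' argument rigorous: when we glue $G_1$ and $G_2$ we must ensure the resulting graph is genuinely self-complementary (the gluing must be compatible with an isomorphism to the complement, which is where the BSC half-edge structure of Observation \ref{cyclehalf} is used to pick exactly which cross-edges to include), and we must bound the Hadwiger number of the glued graph from above rather than below, which is the less routine direction. Concretely, I expect to need: (i) a verification that the iterated construction stays self-complementary, most easily by exhibiting the permutation $\rho$ explicitly as a product of $4$-cycles acting on the blocks; and (ii) a short lemma that a graph which becomes disconnected (or nearly so) upon deleting a small separator cannot have a large complete minor, applied inductively to the block structure. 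An alternative to (ii), if the cross-edge count is kept low enough, is to show each constructed graph has an explicit tree-decomposition of width $2n-1$ (resp.\ $2n$), since treewidth $<m$ precludes a $K_{m+1}$ minor; I would try this first as it sidesteps the branch-set bookkeeping entirely.
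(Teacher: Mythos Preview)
Your proposal is a research plan, not a proof: you never actually write down a graph. You speak of a ``join-type combination'' of copies of $P_4$ and $C_5$ with ``few cross-edges'', but which cross-edges to include, why the result is self-complementary, and why its Hadwiger number is at most $2n$ are all deferred to steps (i) and (ii) that you do not carry out. The additivity $h(\text{glued graph})\le h(G_1)+h(G_2)$ fails without strong hypotheses on the gluing (already the full join of two edgeless $2$-vertex graphs is $C_4$, with Hadwiger number $3>1+1$), so that inequality is where the entire content would lie, and it is missing. A version of your idea can in fact be made to work---iterating the standard $P_4$-extension, in which two new vertices are joined to all of $G$ and two more are left pendant, does produce self-complementary graphs of treewidth exactly $2n-1$---but you have not done this, and it is more laborious than necessary.

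The paper's argument is much shorter and uses neither induction nor treewidth. It writes down one explicit graph on $4n$ vertices: take $N\simeq K_{2n}$, take $H$ an independent set of size $2n$, join the first half of $V(N)$ to the first half of $V(H)$ by a copy of $K_{n,n}$, and likewise the second halves. This is self-complementary by inspection, and every vertex of $H$ has degree exactly $n$. If $G$ had a $K_{2n+1}$ minor, the $2n+1$ branch sets could not all meet $N$ (only $2n$ vertices there), and any branch set lying entirely inside the independent set $H$ must be a singleton $\{w_i\}$; but then that vertex sees at most $\deg_G(w_i)=n<2n$ of the other branch sets, a contradiction. Part~(b) follows by adjoining one vertex connected to all of $N$. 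The key idea you are missing is this pigeonhole-plus-degree obstruction, which replaces all of the branch-set and tree-decomposition bookkeeping you anticipate.
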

\begin{proof}
\begin{enumerate}
\item[(a)] Let $N\simeq K_{2n}$ such that $V(N)=\{v_1,...,v_{2n}\}$ and $H$ such that $V(H)=\{w_1,...,w_{2n}\}$ and $E(H)=\emptyset$. Let $L=L_1 \sqcup L_2$, with $L_1$ the complete bipartite graph on $\{v_1,...,v_n\}$ and $\{w_1,...,w_n\}$, and $L_2$  the complete bipartite graph on $\{v_{n+1},...,v_{2n}\}$ and $ \{w_{n+1},...,w_{2n}\}$. The graph $G=N \cup L \cup H$ is a self-complementary graph on $4n$ vertices. Assume that $G$ contains a $K_{2n+1}$ minor. Since $|V(N)|=2n$ and $H$ is a discrete set of points with no edges between its vertices, there exists $w_i\in V(H)$ such that none of the edges adjacent to $w_i$ were contracted in creating this minor. The degree of $w_i$ in the created minor is at most $deg_G(w_i)=n$, which is a contradiction as the degree of any vertex in $K_{2n+1}$ is $2n$.

\item[(b)] Let $G'$ be the self-complementary graph on $4n+1$ vertices obtained by adding one vertex $a$  and  all edges $av_i$  to the graph $G$ described in part (a). 
The same argument as in part $(a)$  shows that $G'$ cannot contain a $K_{2n+2}$ minor.
\end{enumerate}
\end{proof}

\begin{remark}The presence of complete minors in self-complementary graphs enables us to revisit a conjecture about the $\mu-$invariant introduced by C. de Verdi\`{e}re in \cite{dV}. For any simple graph $G$, $\mu(G)$ is the largest corank of a class of matrices defined based on incidence relations in $G$. In  \cite{dV},  de Verdi\`{e}re proved that $\mu$ is monotone under taking minors. It was conjectured in \cite{KLV} that for any graph $G$ on $n$ vertices, $\mu(G)+\mu(cG)\ge n-2$. One can check, using only the definition of $\mu$, that $\mu(K_n)=n-1$, for all $n\ge 2$. For a self-complementary graph $G$ on $4n$ vertices, Theorem \ref{sc} shows that $\mu(G)+\mu(cG)\ge 2n-1+2n-1= 4n-2$. For a graph $G$ on $4n+1$ vertices, Theorem \ref{sc} shows that $\mu(G)+\mu(cG)\ge 2n+2n= 4n > (4n+1)-2$. This validates the conjecture for self-complementary graphs and shows that equality does not always hold.
\end{remark}


\section{Topological Properties of Self-Complementary Graphs}
In this section we discuss topological properties of self-complementary graphs which  derive from Theorem \ref{sc}: outerplanarity,
planarity, intrinsic linkness and intrinsic knottedness. 
A graph is called \textit{outerplanar} if it can be embedded in the plane such that all its vertices sit on the outer face of the embedding.
It is known that a graph is outerplanar if and only if it doesn't have  $K_4$ or $K_{2,3}$ as its minors. 
A graph is called \textit{planar} if it can be embedded in the plane.
The two excluded minors for planarity are $K_5$ and $K_{3,3}$ \cite{Ku}.

A graph $G$ is said to be \textit{intrinsically linked }(IL) if every embedding of $G$ in $\mathbb{R}^3$ contains two disjoint linked cycles.
  If not IL, then $G$ is \textit{linklessly embeddable} (nIL).
 By work of Conway and Gordon \cite{CG}  and Robertson, Seymour, and Thomas \cite{Sa}, a graph is intrinsically linked if and only if it contains one of the seven graphs in the Petersen family as a minor.
 One of these graphs is $K_6$. 
 A graph $G$ is said to be \textit{intrinsically knotted }(IK) if every embedding of $G$ in $\mathbb{R}^3$ contains a nontrivial knot.
 By work of Conway and Gordon \cite{CG}, we know that $K_7$ is IK. 
A graph is said to be $n-$\textit{apex} if deleting $n$ or fewer vertices produces a planar (sub)graph.
By \cite{Sa}, a $1-$apex graph is nIL.
By \cite{BBFFHL}, a $2-$apex graph is not IK.

\begin{corollary}
\label{outerplanar}
A self-complementary graph on $n\ge 8$ vertices in not outerplanar.  
\end{corollary}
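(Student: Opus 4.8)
The plan is to reduce the statement to the existence of a $K_4$ minor and then to invoke Theorem~\ref{sc} directly. Recall the excluded-minor characterization quoted just above: a graph is outerplanar if and only if it contains neither $K_4$ nor $K_{2,3}$ as a minor. Hence it suffices to exhibit a $K_4$ minor in every self-complementary graph on $n \ge 8$ vertices.

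For the key step, I would use the parity constraint on self-complementary graphs together with the lower bound of Theorem~\ref{sc}. Since a self-complementary graph has $n = 4k$ or $n = 4k+1$ vertices, the hypothesis $n \ge 8$ forces $n \in \{8, 9, 12, 13, \dots\}$, and in every such case $\lfloor \frac{n+1}{2} \rfloor \ge \lfloor \frac{9}{2} \rfloor = 4$. By Theorem~\ref{sc}, $G$ contains a $K_{\lfloor (n+1)/2 \rfloor}$ minor, and since $K_4$ is a subgraph (hence a minor) of $K_{\lfloor (n+1)/2 \rfloor}$, the graph $G$ contains a $K_4$ minor. Therefore $G$ is not outerplanar.

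There is no real obstacle here: the corollary is an immediate consequence of Theorem~\ref{sc} and the Chartrand--Harary excluded-minor criterion. The only thing worth adding is a word on sharpness of the bound $n \ge 8$: the path $P_4$ on $4$ vertices and the cycle $C_5$ on $5$ vertices are self-complementary and outerplanar, so the excluded small cases $n = 4, 5$ are genuinely excluded, and the statement is best possible.
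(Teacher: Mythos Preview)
Your proposal is correct and follows essentially the same approach as the paper: invoke Theorem~\ref{sc} to obtain a $K_4$ minor and conclude via the excluded-minor characterization of outerplanarity. The paper's proof is the one-line version of what you wrote, and your sharpness remark about $P_4$ and $C_5$ matches the paper's subsequent observation that all self-complementary graphs on fewer than $8$ vertices are outerplanar.
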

\begin{proof}
By Theorem \ref{sc}, any such graph $G$ contains a $K_4$ minor, thus $G$ is not outerplanar.
\end{proof}
 We note that all  self-complementary  graphs on less than 8 vertices are outerplanar. See Figure \ref{scsmall}(a).
 While this is a corollary to our theorem, it is not a new result.
A  self-complementary graph on $n\ge 8$ vertices has at least $2n-2$ edges, and it is known that such graphs are not outerplanar. 


\begin{figure}[htpb!]
\begin{center}
\begin{picture}(400, 70)
\put(0,0){\includegraphics[width=5in]{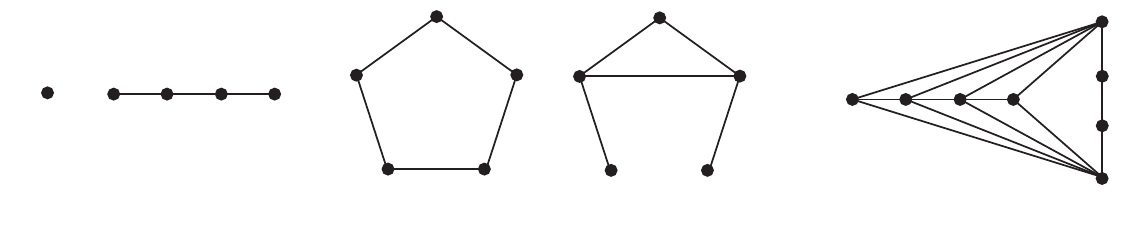}}
\put(130,0){(a)}
\put(305,0){(b)}
\end{picture}
\caption{(a) Self-complementary graphs on one, four and five vertices, (b) Planar self-complementary graph on 8 vertices }
\label{scsmall}
\end{center}
\end{figure}

\begin{corollary}
\label{planar}
A self-complementary graph on $n\ge 9$ vertices in not planar.
\end{corollary}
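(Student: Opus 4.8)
The plan is to derive Corollary \ref{planar} as an immediate consequence of Theorem \ref{sc}, exactly parallel to the proof of Corollary \ref{outerplanar}. The two excluded minors for planarity are $K_5$ and $K_{3,3}$, so it suffices to exhibit a $K_5$ minor in any self-complementary graph on $n\ge 9$ vertices. Theorem \ref{sc} guarantees a $K_{\lfloor (n+1)/2\rfloor}$ minor, and for $n\ge 9$ we have $\lfloor (n+1)/2\rfloor \ge 5$, so the graph contains $K_5$ as a minor and is therefore non-planar.

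The only real step is the inequality $\lfloor (n+1)/2\rfloor \ge 5$ for $n\ge 9$. Since a self-complementary graph has either $4k$ or $4k+1$ vertices, the first admissible value with $n\ge 9$ is $n=9$ (the case $n=4\cdot 2+1$), giving $\lfloor 10/2\rfloor = 5$; all larger admissible $n$ only increase this floor. So the bound is tight: $n=9$ is precisely the threshold, which also matches the known fact mentioned in the introduction that self-complementary graphs on nine vertices are non-planar.

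I do not anticipate any obstacle here — there is no genuine difficulty, only the invocation of Theorem \ref{sc} and a one-line arithmetic check. If one wanted to be slightly more careful, one could also note that any graph containing $K_m$ as a minor for $m\ge 5$ automatically fails Kuratowski's criterion, since $K_5$ is itself a minor of $K_m$; but this is already subsumed in the statement that planar graphs have no $K_5$ minor. The proof will therefore read essentially as: ``By Theorem \ref{sc}, any self-complementary graph $G$ on $n\ge 9$ vertices contains a $K_{\lfloor (n+1)/2\rfloor}$ minor; since $n\ge 9$ forces $\lfloor (n+1)/2\rfloor \ge 5$, $G$ contains a $K_5$ minor and hence is not planar.''

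\begin{proof}
By Theorem \ref{sc}, any such graph $G$ contains a $K_{\lfloor \frac{n+1}{2}\rfloor}$ minor. Since $n\ge 9$, we have $\lfloor \frac{n+1}{2}\rfloor \ge 5$, so $G$ contains a $K_5$ minor. By Kuratowski's theorem, $G$ is not planar.
\end{proof}
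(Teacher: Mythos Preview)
Your proof is correct and matches the paper's own argument: invoke Theorem \ref{sc} to obtain a $K_{\lfloor (n+1)/2\rfloor}$ minor, observe that $n\ge 9$ makes this at least $K_5$, and conclude non-planarity. The paper's version is slightly terser (it omits the explicit inequality and the name ``Kuratowski''), but the content is identical.
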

\begin{proof}
By Theorem \ref{sc}, any such graph $G$ contains a $K_5$ minor and thus it is not planar.
\end{proof}
While non-planarity of self-complementary graphs with $n\ge9$ vertices is also a consequence of \cite{BHK}, we are establishing the existence of a $K_5$ minor. 
Figure \ref{scsmall}(b).

\begin{corollary}
A self-complementary graph on $n\ge 12$ vertices is intrinsically linked.
\end{corollary}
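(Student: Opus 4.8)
The plan is to reduce the statement to the existence of a $K_6$ minor and then invoke the topological characterization of intrinsically linked graphs recalled above. By the work of Conway and Gordon \cite{CG} together with Robertson, Seymour, and Thomas \cite{Sa}, a graph is intrinsically linked if and only if it contains one of the seven Petersen-family graphs as a minor, and $K_6$ is one of those seven graphs. Hence it suffices to exhibit a $K_6$ minor in every self-complementary graph on $n \ge 12$ vertices.

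This is immediate from Theorem \ref{sc}. If $G$ is self-complementary on $n \ge 12$ vertices, then Theorem \ref{sc} provides a $K_{\lfloor (n+1)/2 \rfloor}$ minor of $G$; since $n \ge 12$ forces $\lfloor (n+1)/2 \rfloor \ge \lfloor 13/2 \rfloor = 6$, we may delete vertices of this complete minor to obtain a $K_6$ minor of $G$. Therefore $G$ is intrinsically linked.

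There is no real obstacle beyond citing Theorem \ref{sc} and the Petersen-family classification; the only point to verify is the arithmetic $\lfloor (n+1)/2 \rfloor \ge 6$, which holds for all admissible orders $n = 12, 13, 16, 17, \dots$ (recall a self-complementary graph has $n \equiv 0$ or $1 \pmod 4$). One may add, for context, that the threshold $n \ge 12$ is exactly where this argument first applies, as $\lfloor (n+1)/2 \rfloor \ge 6$ fails for $n \le 10$; the status of self-complementary graphs on $8$ or $9$ vertices would need a separate analysis, and indeed the planar example in Figure \ref{scsmall}(b) shows that a self-complementary graph on $8$ vertices can be linklessly embeddable.
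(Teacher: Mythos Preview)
Your proof is correct and follows exactly the paper's own approach: invoke Theorem \ref{sc} to obtain a $K_6$ minor for $n\ge 12$, then appeal to the Petersen-family characterization (in particular, the intrinsic linkedness of $K_6$) to conclude. The extra arithmetic check and contextual remarks you add are accurate but not needed for the argument.
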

\begin{proof}
By Theorem \ref{sc}, any such graph $G$ contains a $K_6$ minor. 
Since $K_6$ is an excluded minor for linkless embbedability, it follows that $G$ is intrinsically linked.
\end{proof}

\begin{corollary}
A self-complementary graph on $n\ge 13$ vertices in intrinsically knotted.
\end{corollary}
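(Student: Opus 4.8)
The plan is to follow the template of the two preceding corollaries, with $K_7$ playing the role that $K_6$ plays for intrinsic linking. The first step is the arithmetic observation that for $n \ge 13$ one has $\lfloor \frac{n+1}{2} \rfloor \ge 7$ (indeed $\lfloor \frac{14}{2} \rfloor = 7$), so Theorem \ref{sc} guarantees that every self-complementary graph $G$ on $n \ge 13$ vertices contains a $K_7$ minor.

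The second step is to promote ``$G$ has a $K_7$ minor'' to ``$G$ is intrinsically knotted''. Here I would invoke the Conway--Gordon theorem \cite{CG} that $K_7$ is intrinsically knotted, together with the standard fact that intrinsic knottedness is inherited along the minor relation, i.e.\ the class of knotlessly embeddable graphs is minor-closed. Concretely: given any embedding of $G$ in $\R^3$, pass to a subgraph of $G$ realizing $K_7$ as a minor, and contract the edges of a spanning tree of each branch set by sliding each such edge to a point along its embedded arc. This produces a spatial embedding of $K_7$ in which every cycle corresponds to a cycle of the original embedding of $G$ with the same ambient knot type (contracting a tree edge does not alter the knot type of a cycle running through it, and merely restricts the others). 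The knotted Hamiltonian cycle of $K_7$ supplied by Conway--Gordon therefore pulls back to a nontrivially knotted cycle of $G$, so $G$ is intrinsically knotted.

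I do not expect a genuine obstacle. The only point requiring a touch of care is the claim that contracting an edge of a spatial graph yields a bona fide spatial embedding of the minor without destroying the knot type of the surviving cycles; this is well documented in the literature on intrinsic knotting and is exactly the monotonicity underlying the excluded-minor formulation used in the intrinsic linking corollary above, so I would cite it rather than reprove it. If desired, I would also append a remark that $n \ge 13$ is the natural threshold for this method, since for $n = 12$ Theorem \ref{sc} only yields a $K_6$ minor, and one would need a separate argument (e.g.\ exhibiting a knotlessly embeddable self-complementary graph on $12$ vertices, such as a suitable graph from Observation \ref{sharp}) to decide the case $n = 12$.
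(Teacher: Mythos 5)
Your proposal is correct and follows the paper's own argument exactly: $n\ge 13$ gives $\lfloor\frac{n+1}{2}\rfloor\ge 7$, so Theorem \ref{sc} yields a $K_7$ minor, and Conway--Gordon plus minor-monotonicity of intrinsic knottedness finishes. The extra detail you supply about contracting branch sets is just an expansion of the standard fact the paper cites implicitly.
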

\begin{proof}
By Theorem \ref{sc}, any such graph $G$ contains a $K_7$ minor. 
Since $K_7$ is an excluded minor for knotless embbedability, it follows that $G$ is intrinsically knotted.
\end{proof}

On the other hand, there are self-complementary graphs on 12 vertices which are not intrisically knotted.
We describe the construction of such a graph.
Figure \ref{notIK}(a) represents a bipartite self-complementary graph in $K_{6,6}$.
Construct a self-complementary graph on 12 vertices $G$ by adding all edges between filled vertices. 
Deleting vertices $a$ and $b$ of $G$ creates the planar graph in Figure \ref{notIK}(b).
This means $G$ is $2-$apex and thus it is not intrinsicallly knotted.


\begin{figure}[htpb!]
\begin{center}
\begin{picture}(370, 90)
\put(0,0){\includegraphics[width=5in]{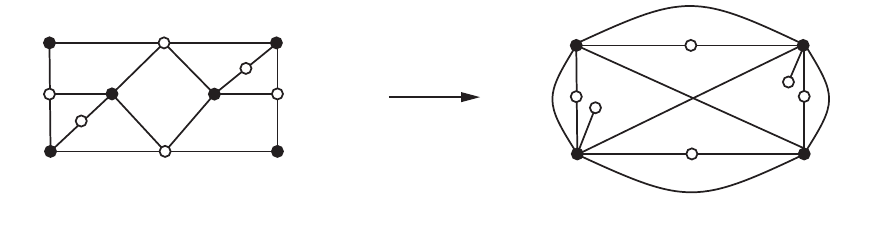}}
\end{picture}
\put(-310,5){(a)}
\put(-90,5){(b)}
\put( -317,57){$a$}
\put(-295,57){$b$}
\caption{(a) A self-complementary graph on 12 vertices is obtained by pairwise connecting all filled vertices, (b) Removing vertices $a$ and $b$ gives a planar subgraph. }
\label{notIK}
\end{center}
\end{figure}

\bibliographystyle{amsplain}

\end{document}